\let\mathcal=\mathscr
\newtheorem{thm}{Theorem}[section]
\newtheorem{corol}[thm]{Corollary}
\newtheorem{lemma}[thm]{Lemma}
\newtheorem{prop}[thm]{Proposition}
\newtheorem{assumption}[thm]{Assumption}
\theoremstyle{remark}
\newtheorem{rem}[thm]{Remark}
\newtheorem{ex}[thm]{Example}
\newenvironment{remark}{\begin{rem}\rm}{\qee\end{rem}}
\newcommand{\End}{\mbox{\it End}\,}
\newcommand{\cA}{{\mathcal A}}
\newcommand{\cE}{{\mathcal E}}
\newcommand{\cO}{{\mathcal O}}
\newcommand{\ati}{{{\mathcal D}_{\cE}}}
\newcommand{\lati}[1]{{\Lambda^{#1}{{\mathcal D}_\cE^\ast}}}
\newcommand{\qee}{\mbox{\hspace{0.2mm}}\hfill$\triangle$}
\begin{document}\thispagestyle{empty}
\begin{center}
{\Large\bf Koszul complexes and  spectral sequences \\[5pt] associated  with  Lie algebroids} \\[10pt]
{\sc Ugo Bruzzo}$^{\ddag\P\S\star\sharp}$ and {\sc Vladimir  N. Rubtsov}$^{\flat\natural\star}$  \\[5pt] {\small $\ddag$  SISSA (Scuola Internazionale Superiore di Studi Avanzati), \\ Via Bonomea 265, 34136 Trieste, Italy;  \\
$\P$ Departamento de Matem\'atica, Universidade Federal da \\ 
Para\'iba, Campus I, Jo\~ao Pessoa, PB, Brazil; \\
$\S$ INFN (Istituto Nazionale di Fisica Nucleare), Sezione di Trieste;\\ 
$\star$ IGAP (Institute for Geometry and Physics), Trieste; \\ 
$\sharp$ Arnold-Regge Institute for Algebra,  \\ Geometry and Theoretical Physics, Torino  
}\\[5pt]
  {\small 
$\flat$ Universit\'e d'Angers, D\'epartement de Math\'ematiques, \\  UFR Sciences, LAREMA, UMR 6093 du CNRS, \\
2 bd.~Lavoisier, 49045 Angers Cedex 01, France; \\
$\natural$ ITEP Theoretical Division, 25 Bol.~Tcheremushkinskaya,\\ 117259, Moscow, Russia
\\[5pt]  E-Mail: {\tt bruzzo@sissa.it}, {\tt Volodya.Roubtsov@univ-angers.fr}}
\end{center} 

\bigskip
\begin{quote}
\footnotesize  {\sc Abstract.} We study some spectral sequences associated with a locally free $\cO_X$-module $\cA$   which has a Lie algebroid structure. 
Here $X$ is either a complex manifold  or a regular scheme over an algebraically closed field $k$.  One  spectral sequence can be associated with $\cA$ by choosing a global section $V$ of $\cA$, and considering a  Koszul complex with a differential given by inner product by $V$. This spectral sequence is shown to degenerate at the second page by using Deligne's degeneracy criterion.

Another  spectral sequence we study arises when considering the Atiyah algebroid  $\ati$ of a holomolorphic vector bundle $\cE$ on a complex manifold.  If  $V$ is a differential operator on $\cE$ with scalar symbol, i.e, a global section of $\ati$, we associate with the pair $(\cE,V)$ a twisted Koszul complex.   The first 
spectral sequence associated with this complex is known to degenerate at the first page in the untwisted ($\cE=0$) case. \end{quote}

 \vfill
 \begin{quote}
\begin{center} \em \small This is a contributed paper to the proceedings of  the 2nd Workshop of the
S\~ao Paulo Journal of Mathematical Sciences:
``Jean-Louis Koszul in S\~ao Paulo, His Work and Legacy.''
Institute of Mathematics and Statistics, University of S\~ao Paulo, Brazil, November 13-14, 2019
\end{center}
\end {quote}
 \vfill
\noindent\parbox{.75\textwidth}{\hrulefill}\par
\noindent\begin{minipage}[c]{\textwidth} \footnotesize 
 \noindent   
Date: \today  \\
{\em 2000 Mathematics Subject Classification:} 14F05, 14F40, 32L10,   55N25, 55N91, 55R20\par\noindent
The authors gratefully acknowledge
financial support and hospitality during  visits to
Universit\'e d'Angers and {\sc sissa}. Support for this work was provided by {\sc prin} ``Geometria delle variet\`a algebriche,''     the {\sc infn} project {\sc gast} ``Gauge and string theories", the {\sc einstein} Italo-Russian project  ``Integrability in topological string and field theory,' and the {\sc geanpyl-ii}  Angers-{\sc sissa} project
``Generalized Lie algebroid strucures.''
\end{minipage}
\newpage
\noindent

\section{Introduction}
In this paper we consider some spectral sequences that one can attach to a Lie algebroid. To be more precise, if $X$ is a  complex manifold, or a regular noetherian  scheme over an algebraically closed field $k$ of characteristic zero, we consider a locally free $\cO_X$-module $\cA$ having a Lie algebroid structure (definitions will be given in the next Section). One can introduce a complex $\Omega_\cA^\bullet=\Lambda^\bullet\cA^\ast$
which is a generalization of the (holomorphic) de Rham complex $\Omega_X^\bullet$. Now a Lie algebroid $\cA$ comes with a morphism of sheaves of Lie $k$-algebras (the anchor morphism) to the tangent sheaf $\Theta_X$, and the kernel of the anchor is a sheaf of ideals of $\cA$ (and a sheaf of Lie $\cO_X$-algebras); this allows one to introduce, in analogy with the Hochschild-Serre spectral sequence \cite{Hoch-Serre53}, a filtration leading to a spectral sequence which converges to the hypercohomology $\mathbb H(X,\Omega_\cA^\bullet)$. This was already considered in  \cite{BKTV09} in the $C^\infty$ case; moreover, \cite{Rub-thesis,Roub80} describe this spectral sequence
in the case of the Atiyah algebroid of a vector bundle. 
 In \cite{Ugo-derived} and \cite{BMRT} this and other spectral sequences were studied in detail.   Lie-Rinehart algebras can be regarded as special cases of Lie algebroids, so that we get a spectral sequence for Lie-Rinehart algebras: this generalizes the Hochschild-Serre spectral sequence for ideals in Lie algebras \cite{Hoch-Serre53}.

Other spectral sequences arise when we fix a section $V$ of $\cA$; this yields a complex of the Koszul type, which we call a Lie-Koszul complex. Then the general machinery of homological algebra \cite{EGA3-I,Verdier-ast} produces two spectral sequences. In Section \ref{KosDegSec},
by using Deligne's degeneracy criterion \cite{Deligne-deg}, we show that the second spectral sequence degenerates. The fact that this spectral sequence satisfies the condition of Deligne's criterion means that the Lie-Koszul complex of a (holomorphic) Lie algebroid is formal (it is isomorphic, in the derived category of coherent sheaves, with the complex formed by its cohomology sheaves).

To study the first spectral sequence of a Lie-Koszul complex,  we specialize to the case when $\cA$ is the Atiyah algebroid of a holomorphic vector bundle $\cE$ on a complex manifold $X$ (Section \ref{sectionAtiyah}). 
 Let us recall that  $\ati$ is  the bundle of first order differential operators on $\cE$ with scalar symbol. $\ati$ sits in
an exact sequence of sheaves of $\cO_X$-modules
\begin{equation} \label{atiyah} 0 \to  \End(\cE) \to \ati \xrightarrow{\sigma } \Theta_X \to 0
\end{equation} where   $\sigma$  is the symbol map. This spectral sequence relates to the twisted holomorphic equivariant cohomology we introduced in \cite{BR-hec}. For $\cE=0$ (i.e, in the case of the de Rham complex) this spectral sequence was studied by   Carrell and Lieberman \cite{CL-vf} and Bismut \cite{Bismut04} when $X$ is K\"ahler manifold. In that case the spectral sequence degenerates at the first page.

{\bf Acknowledgements.} We thank Paul Bressler, Tony Pantev, Jean-Claude Thomas and Pietro Tortella for useful discussions.
U.B.~thanks the organizers of the ``Jean-Louis Koszul in S\~ao Paulo, His Work and Legacy'' workshop for their kind invitation.

\bigskip
\section{Formality of the Lie-Koszul complexes}\label{KosDegSec}
We consider    a (holomorphic) Lie algebroid $\cA$, over $X$, the latter being a complex manifold, or a regular noetherian scheme over an algebraically closed field $k$. We choose a global section $V$ of $\cA$ and consider the morphism (inner product) $i_V\colon\mathcal K_\cA^\bullet\to 
\mathcal K_\cA^{\bullet+1}$, where $\mathcal K_\cA^p = \Omega_\cA^{-p}$, $p\le 0$. We shall call $(\mathcal K_\cA^\bullet,i_V)$
the {\em Lie-Koszul complex associated with the pair $(\cA,V)$.} This generalizes the Koszul complex $(\Omega_X^{-\bullet},i_V)$   associated with the complex of differential forms on $X$ with the differential given by the inner product by a (holomorphic) vector field $V$ on $X$.
This will be called the {\em de Rham-Koszul complex} associated with the vector field $V$.

By general principles  \cite{EGA3-I,Verdier-ast} we can associate two spectral sequences with this complex, both converging to the hypecohomology $\mathbb H(\mathcal K_\cA^\bullet,i_V)$. In general,  if $\mathfrak A$, $\mathfrak B$ are Abelian categories, denote by $D^+(\mathfrak A)$ the derived category of complexes of objects in $\mathfrak A$ bounded from below, and let 
 $F\colon D^+(\mathfrak A) \to \mathfrak B$ a cohomological 
 functor.\footnote{$F$ is said to be a cohomological functor if it maps every distinguished triangle
 $$ \xymatrix{ X \ar[r]^u & Y  \ar[r]^v & Z  \ar[r]^w & X[1] }$$ to a long exact sequence 
 $$ \xymatrix {R^iF(X) \ar[r]^{R^iF(u)} &  R^iF(Y) \ar[r]^{R^iF(v)} &  R^iF(Z)  \ar[r]^{R^iF(w)\ \ }  &  R^{i+1}F (X) }.$$}
 Let $\mathcal K$ be an object in $D^+(\mathfrak A)$. We recall from \cite{EGA3-I,Verdier-ast} that with these data   one can associate two spectral sequences, both functorial in $\mathcal K$, 
 and both  converging to $R^{\bullet}F(\mathcal K)$. The first two pages of the first spectral sequence are 
 $$I_1^{p,q}=R^qF(\mathcal K^p),\qquad I_2^{p,q} = H^p(R^qF(\mathcal K))$$
 and the differential $d_1$ coincides (perhaps up to a sign, depending on   conventions) with the differential of the complex
 $\mathcal K$.
 The second page of the second spectral sequence is 
 $$I\! I_2^{p,q} = R^pF(H^q(\mathcal K)).$$
 
 The degeneration of the the second spectral sequence may be studied by means of {\em Deligne's degeneracy criterion} \cite{Deligne-deg}.
 Let us state it in  generality.  We shall  replace the derived category $D^+(\mathfrak A)$   by the bounded derived category $D^b(\mathfrak A)$.

\begin{thm}[Deligne]  The following two conditions are equivalent:

(i) the   spectral sequence $I\! I_\bullet$ degenerates at its second page for every choice of the functor $F$;

(ii) $\mathcal K^\bullet$ is isomorphic to $\oplus_i  H^i(\mathcal K^\bullet)[-i]$ in $D^b(\mathfrak A)$.

\end{thm}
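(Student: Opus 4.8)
The plan is to establish the two implications separately; the implication $(ii)\Rightarrow(i)$ is formal, while $(i)\Rightarrow(ii)$ is where the real content lies.

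For $(ii)\Rightarrow(i)$ I would argue as follows. Assume $\mathcal K^\bullet\cong\bigoplus_i H^i(\mathcal K^\bullet)[-i]$ in $D^b(\mathfrak A)$. Since the spectral sequence $I\!I_\bullet$ is functorial in $\mathcal K^\bullet$, an isomorphism in $D^b(\mathfrak A)$ carries it to the spectral sequence attached to $\bigoplus_i M_i[-i]$ with $M_i=H^i(\mathcal K^\bullet)$, so it suffices to treat a complex of this shape. The canonical (truncation) filtration of $\bigoplus_i M_i[-i]$ decomposes into a direct sum of filtrations compatibly with every $R^\bullet F$, so $I\!I_\bullet$ is the direct sum over $i$ of the spectral sequences attached to the one-term complexes $M_i[-i]$; for $M_i[-i]$ the page $I\!I_2^{p,q}=R^pF\big(H^q(M_i[-i])\big)$ is concentrated in the single row $q=i$, whence all differentials $d_r$ with $r\ge2$ vanish. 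A direct sum of spectral sequences degenerating at the second page degenerates at the second page.

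For $(i)\Rightarrow(ii)$ the key reduction is that it suffices to construct, for each $i$, a morphism $\psi_i\colon H^i(\mathcal K^\bullet)[-i]\to\mathcal K^\bullet$ in $D^b(\mathfrak A)$ with $H^i(\psi_i)=\id$: then $\psi=\sum_i\psi_i\colon\bigoplus_i H^i(\mathcal K^\bullet)[-i]\to\mathcal K^\bullet$ satisfies $H^j(\psi)=H^j(\psi_j)=\id$ for all $j$ --- the remaining summands contribute nothing to $H^j$ because $H^j\big(H^\ell(\mathcal K^\bullet)[-\ell]\big)=0$ for $\ell\ne j$ --- so $\psi$ induces an isomorphism on all cohomology objects and is thus an isomorphism in $D^b(\mathfrak A)$. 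To build $\psi_i$ I would apply hypothesis $(i)$ to the cohomological functor $F=\operatorname{Hom}_{D^b(\mathfrak A)}\big(H^i(\mathcal K^\bullet)[-i],-\big)$ with values in abelian groups; this is a legitimate $F$, since $\operatorname{Hom}_{D^b(\mathfrak A)}(\mathcal P,-)$ sends distinguished triangles to long exact sequences. Its second spectral sequence for $\mathcal K^\bullet$ has $I\!I_2^{p,q}=\operatorname{Hom}_{D^b(\mathfrak A)}\big(H^i(\mathcal K^\bullet)[-i],H^q(\mathcal K^\bullet)[p]\big)=\Ext^{\,p+i}_{\mathfrak A}\big(H^i(\mathcal K^\bullet),H^q(\mathcal K^\bullet)\big)$ and converges to $\operatorname{Hom}_{D^b(\mathfrak A)}\big(H^i(\mathcal K^\bullet)[-i],\mathcal K^\bullet[p+q]\big)$. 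Since $\Ext^{\,p+i}_{\mathfrak A}=0$ for $p<-i$, in total degree $0$ the smallest column that contributes is $p=-i$, and there $I\!I_2^{-i,i}=\operatorname{End}_{\mathfrak A}\big(H^i(\mathcal K^\bullet)\big)\ni\id$. No differential enters this extremal column, and by the assumed degeneration at the second page none leaves it, so $\id$ survives to $I\!I_\infty^{-i,i}$. The edge homomorphism of $I\!I_\bullet$ in this column is $\phi\mapsto H^i(\phi)$, with image $I\!I_\infty^{-i,i}$; hence it is surjective, and lifting $\id_{H^i(\mathcal K^\bullet)}$ produces the required $\psi_i$.

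The step I expect to be the real obstacle is the last one: identifying the pertinent edge homomorphism of the second spectral sequence with $\phi\mapsto H^i(\phi)$ and checking that $I\!I_2^{-i,i}$ sits in the extremal column of the filtration on the abutment. Both points require unwinding the construction of $I\!I_\bullet$ from the truncation filtration $\tau_{\le\bullet}\mathcal K^\bullet$ and tracking which graded pieces are subobjects and which are quotients of the abutment; granted this, the hypothesis that all $d_r$ with $r\ge2$ vanish does the rest. I would also note in passing the routine facts that, for the bounded complex $\mathcal K^\bullet$, the truncation-filtration spectral sequence exists with no special hypotheses on $\mathfrak A$, and that the functors $\operatorname{Hom}_{D^b(\mathfrak A)}(\mathcal P,-)$ are admissible choices of $F$.
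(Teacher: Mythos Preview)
The paper does not prove this theorem: it is stated as Deligne's criterion and attributed to \cite{Deligne-deg}, with no argument given. So there is nothing in the paper to compare your proposal against.

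That said, your proposal is the standard proof and is correct. The direction $(ii)\Rightarrow(i)$ is indeed formal, and for $(i)\Rightarrow(ii)$ your choice of $F=\operatorname{Hom}_{D^b(\mathfrak A)}(H^i(\mathcal K^\bullet)[-i],-)$ is exactly Deligne's. The point you flag as the real obstacle --- that $I\!I_2^{-i,i}$ sits as the top graded piece of the abutment in degree $0$, with edge map $\phi\mapsto H^i(\phi)$ --- is genuine but routine once one writes it out. Using the truncation triangles $\tau_{\le q-1}\mathcal K^\bullet\to\tau_{\le q}\mathcal K^\bullet\to H^q(\mathcal K^\bullet)[-q]$, one checks first that $\operatorname{Hom}(H^i(\mathcal K^\bullet)[-i],\tau_{\ge i+1}\mathcal K^\bullet)=0$ (a map from an object concentrated in degree $i$ into a complex with cohomology in degrees $\ge i+1$ vanishes by the hypercohomology spectral sequence for $\operatorname{Hom}$), so every $\phi$ lifts to $\tau_{\le i}\mathcal K^\bullet$; the edge map is then the composite with $\tau_{\le i}\mathcal K^\bullet\to H^i(\mathcal K^\bullet)[-i]$, which on $H^i$ is exactly $H^i(\phi)$. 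Since no differentials enter the column $p=-i$ (negative Ext vanishing) and none leave it (the hypothesis), the identity survives and can be lifted to the desired $\psi_i$.

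One small remark: your argument actually shows something slightly sharper than the stated equivalence, namely that $(ii)$ already follows from degeneration for the particular functors $\operatorname{Hom}_{D^b(\mathfrak A)}(M,-)$ with $M$ ranging over the cohomology objects of $\mathcal K^\bullet$. This is worth noting, though it is implicit in Deligne's paper as well.
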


(In the language of homological algebra, the second condition is called {\em formality} of the complex $\mathcal K^\bullet$.) 

To apply Deligne's criterion to our case we take $\mathcal A = Coh(X)$,
$\mathcal B = {\mathbf K}(\mbox{Ab})$ (the category of complexes of Abelian groups) and for $F$ we take the global section functor $\Gamma$. The object we fix in $D^b(X)$ is the Lie-Koszul complex $(\mathcal  K_\cA^\bullet,i_{ V})$.  
We denote by $\mathcal H_\cA^\bullet$ the cohomology sheaves of the complex $(\mathcal  K_\cA^\bullet,i_{ V})$, and by $Y$ the scheme of zeroes of $V$. It is a closed, possibly nonreduced, subscheme (analytical subspace) of $X$. The sheaves $\mathcal H_\cA^\bullet$ are supported on $Y$.
Let $j \colon  Y \to X$ be the scheme-theoretic inclusion, or the inclusion as a morphism in the category of analytic spaces  (a closed immersion). The functor $j_\ast$
is right adjoint to $j^\ast$, so that there are morphisms $j^\star\colon \mathcal F\to j_\ast j^\ast\mathcal F$ for every coherent sheaf $\mathcal F$ on $X$. There is a commutative diagram
\begin{equation}\label{commu}\xymatrix{ \mathcal K_\cA^p \ar[r]^{j^\star}\ar[d]^{i_V} &
j_\ast j^\ast \mathcal K_\cA^p \ar[d]^0 \\
\mathcal K_\cA^{p+1} \ar[r]^{j^\star} & j_\ast j^\ast \mathcal K_\cA^{p+1} 
}\end{equation}
i.e., $j^\star$ is a morphism of complexes if we equip $j_\ast j^\ast \mathcal K_\cA^\bullet$ with the zero morphisms.
Finally, $\mathcal H_\cA^\bullet \simeq j_\ast j^\ast \mathcal K_\cA^\bullet$.
Now we have:

\begin{prop} The morphism of complexes 
$j^\star\colon (\mathcal K_\cA^{\bullet}, i_{V}) \to (j_\ast j^\ast\mathcal K_\cA^{\bullet},0)$ is a quasi-isomorphism.
\end{prop}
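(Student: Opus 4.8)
The plan is to realize $j^\star$ as the quotient morphism in a short exact sequence of complexes and to reduce the Proposition to a local acyclicity statement about ordinary Koszul complexes. Since a morphism of bounded complexes of coherent $\cO_X$-modules is a quasi-isomorphism exactly when it induces isomorphisms on all cohomology sheaves, and these are computed stalk-wise, I would fix a point $x\in X$, trivialize $\cA$ by a local frame $e_1,\dots,e_r$ with dual frame $e^1,\dots,e^r$ of $\cA^\ast$, and write $V=\sum_i f_ie_i$ with $f_i\in\cO_{X,x}$. Then $i_V$ is the Koszul differential of the sequence $(f_1,\dots,f_r)$, and by the very definition of the scheme of zeroes the ideal sheaf of $Y$ is $\mathcal I_Y=(f_1,\dots,f_r)$. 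The local expression $i_Ve^J=\sum_l(-1)^{l-1}f_{j_l}\,e^{J\setminus\{j_l\}}$ shows that $i_V$ is $\cO_X$-linear and carries $\mathcal K_\cA^{p}$ into $\mathcal I_Y\cdot\mathcal K_\cA^{p+1}$; hence $\mathcal I_Y\cdot\mathcal K_\cA^\bullet$ is a subcomplex, the differential induced on $\mathcal K_\cA^\bullet/\mathcal I_Y\mathcal K_\cA^\bullet\cong j_\ast j^\ast\mathcal K_\cA^\bullet$ vanishes — this is exactly the commutativity expressed by \eqref{commu} — and $j^\star$ is the canonical quotient map.

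Consequently there is a short exact sequence of complexes
$$0\longrightarrow(\mathcal I_Y\cdot\mathcal K_\cA^\bullet,\,i_V)\longrightarrow(\mathcal K_\cA^\bullet,\,i_V)\xrightarrow{\ j^\star\ }(j_\ast j^\ast\mathcal K_\cA^\bullet,\,0)\longrightarrow 0,$$
and, by the associated long exact cohomology sequence, $j^\star$ is a quasi-isomorphism if and only if $(\mathcal I_Y\cdot\mathcal K_\cA^\bullet,i_V)$ is acyclic. The degree-zero part of the claim is in any case immediate, since $\mathcal H^0(\mathcal K_\cA^\bullet)=\cO_X/\mathcal I_Y=\cO_Y$ by definition of $Y$ and $j^\star$ is there the identity on $\cO_Y$; so the whole question is the vanishing of $\mathcal H^p(\mathcal I_Y\cdot\mathcal K_\cA^\bullet)$ for $p<0$. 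Over $X\setminus Y$ there is nothing to do: some $f_i$ is invertible, so $\mathcal I_Y=\cO_X$, and $\alpha:=f_i^{-1}e^i$ satisfies $i_V\alpha=1$, so that $\omega\mapsto\alpha\wedge\omega$ is a contracting homotopy for $(\mathcal K_\cA^\bullet,i_V)=(\mathcal I_Y\cdot\mathcal K_\cA^\bullet,i_V)$, while $j_\ast j^\ast\mathcal K_\cA^\bullet$ vanishes there anyway.

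The substance of the argument is therefore concentrated near the points of $Y$, where it amounts to the commutative-algebra assertion that, for the regular local ring $R=\cO_{X,x}$ and the ideal $I=(f_1,\dots,f_r)$, the canonical surjection $K_\bullet(f_1,\dots,f_r)\to K_\bullet(f_1,\dots,f_r)\otimes_R R/I$ is a quasi-isomorphism, i.e. that $I\cdot K_\bullet(f_1,\dots,f_r)$ is acyclic. I expect this to be the main obstacle. The natural attack is induction on $r$ through the mapping-cone presentation $K_\bullet(f_1,\dots,f_r)\simeq\mathrm{Cone}\bigl(K_\bullet(f_1,\dots,f_{r-1})\xrightarrow{\,f_r\,}K_\bullet(f_1,\dots,f_{r-1})\bigr)$, reducing the acyclicity of $I\cdot K_\bullet$ to that produced at the previous stage together with control of how multiplication by $f_r$ acts on the cycles already constructed — and it is here that the hypotheses on $V$ near its zero scheme must be brought in to kill the cycles in negative degrees. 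Once this acyclicity is established, the long exact sequence above gives the Proposition, and, in conjunction with the identification $\mathcal H_\cA^\bullet\simeq j_\ast j^\ast\mathcal K_\cA^\bullet$, it exhibits the Lie--Koszul complex $(\mathcal K_\cA^\bullet,i_V)$ as formal, so that Deligne's criterion applies.
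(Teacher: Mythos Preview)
Your reduction is correct and cleanly executed: $j^\star$ is the quotient by the subcomplex $\mathcal I_Y\cdot\mathcal K_\cA^\bullet$, and the long exact sequence reduces the Proposition to the acyclicity of that subcomplex in negative degrees. The paper itself offers no argument beyond the bare assertion, immediately preceding the statement, that $\mathcal H_\cA^\bullet\simeq j_\ast j^\ast\mathcal K_\cA^\bullet$; the Proposition is then stated without proof.

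The obstacle you isolate cannot be overcome, because the Proposition is false as stated. Take $X=\C$ (or $\mathbb A^1_k$), $\cA=\Theta_X$ with its canonical Lie algebroid structure, and $V=z\,\partial_z$. Then $Y=\{0\}$ and the Lie--Koszul complex is $\cO_X\xrightarrow{\,\cdot z\,}\cO_X$ in degrees $-1,0$, so $\mathcal H_\cA^{-1}=0$, whereas $j_\ast j^\ast\mathcal K_\cA^{-1}=\Omega^1_X\vert_Y$ is a nonzero skyscraper. More generally, at any point $y$ of a proper nonempty $Y$ the local ring $\cO_{X,y}$ is a regular domain, so the top Koszul cohomology $\mathcal H_\cA^{-r}$ (with $r=\rk\cA$) is the annihilator of $\mathcal I_Y$ and hence vanishes, while $j_\ast j^\ast\mathcal K_\cA^{-r}=\Lambda^r\cA^\ast\otimes\cO_Y\neq 0$; and when the $f_i$ form a regular sequence --- precisely the situation of \cite{CL-vf} --- \emph{every} $\mathcal H_\cA^p$ with $p<0$ vanishes. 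Your instinct that ``the hypotheses on $V$ near its zero scheme must be brought in'' is exactly on target, but no such hypotheses appear in the statement, and apart from the trivial cases $Y=X$ and $Y=\emptyset$ the map $j^\star$ is never a quasi-isomorphism. The asserted identification $\mathcal H_\cA^\bullet\simeq j_\ast j^\ast\mathcal K_\cA^\bullet$, and with it this particular route to formality via Deligne's criterion, therefore does not go through in the stated generality.
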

As a consequence, the objects $(\mathcal K_\cA^{\bullet}, i_{V})$ and $\bigoplus_i \mathcal H_\cA^i[-i]$ are isomorphic in the derived category $D^b(X)$. By Deligne's degeneracy criterion, we obtain that the spectral sequence  $I\! I_\bullet$ degenerates at the second page.

We can also say something about the hypercohomology $\mathbb H^\bullet(\mathcal K_\cA)$.
Let us denote by $\dim Y$ the dimension of the highest-dimensional component of $Y$. The proof of the following result goes as in the case of the de Rham-Koszul complex treated in \cite{CL-vf}, p.~306.

\begin{prop} $\mathbb H^m(\mathcal K_\cA^\bullet,i_V)=0$ for $m>\dim Y$.
\label{CL-vf}
\end{prop}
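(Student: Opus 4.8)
\emph{Proof proposal for Proposition \textup{\ref{CL-vf}}.}
The plan is to run the second spectral sequence $I\!I_\bullet$ of the Lie-Koszul complex, imitating the argument of Carrell and Lieberman for the de Rham-Koszul complex. Recall that, with $\mathfrak A = Coh(X)$, $\mathfrak B = \mathbf K(\mathrm{Ab})$ and $F = \Gamma$, this spectral sequence has second page $I\!I_2^{p,q} = H^p(X,\mathcal H_\cA^q)$ and converges to $\mathbb H^{p+q}(\mathcal K_\cA^\bullet,i_V)$. Fix $m > \dim Y$. Since every term $I\!I_\infty^{p,q}$ is a subquotient of $I\!I_2^{p,q}$, it suffices to prove that $I\!I_2^{p,q} = 0$ for all $(p,q)$ with $p+q = m$; then $\mathbb H^m(\mathcal K_\cA^\bullet,i_V)$, which is filtered with associated graded $\bigoplus_{p+q=m} I\!I_\infty^{p,q}$, vanishes.

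I would split this into two cases according to the sign of $q$. Because $\mathcal K_\cA^p = \Omega_\cA^{-p}$ is concentrated in degrees $p \le 0$, the complex $(\mathcal K_\cA^\bullet,i_V)$ is bounded and its cohomology sheaves $\mathcal H_\cA^q$ vanish for $q > 0$; hence $I\!I_2^{p,q} = 0$ whenever $q > 0$. If instead $q \le 0$, then $p = m - q \ge m > \dim Y$, and here one uses the description recalled above: $\mathcal H_\cA^q \simeq j_\ast j^\ast \mathcal K_\cA^q$ is the direct image, under the closed immersion $j\colon Y \to X$, of a coherent sheaf on $Y$. Since $j_\ast$ is exact and identifies global sections, $H^p(X,\mathcal H_\cA^q) \simeq H^p(Y, j^\ast \mathcal K_\cA^q)$, and this group vanishes for $p > \dim Y$ by Grothendieck's vanishing theorem in the scheme case, respectively by its analogue for coherent analytic sheaves on a complex space of dimension $\dim Y$. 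Thus $I\!I_2^{p,q} = 0$ on the whole line $p + q = m$, and the proposition follows.

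This is essentially a formal consequence of the material already assembled in this section, so I do not expect a genuine obstacle; the only point deserving attention is to have the cohomological dimension bound $H^p(Y,-) = 0$ for $p > \dim Y$ available in the analytic category, and to note that it applies precisely because the cohomology sheaves $\mathcal H_\cA^\bullet$ are coherent and supported on $Y$, as was established before the statement. One could equally well bypass the spectral sequence and read the result off from the formality isomorphism $(\mathcal K_\cA^\bullet,i_V) \simeq \bigoplus_i \mathcal H_\cA^i[-i]$ of the previous Proposition: taking hypercohomology gives $\mathbb H^m(\mathcal K_\cA^\bullet,i_V) \simeq \bigoplus_{i\le 0} H^{m-i}(Y, j^\ast \mathcal K_\cA^i)$, every summand of which vanishes since $m - i \ge m > \dim Y$.
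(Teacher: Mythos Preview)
Your proposal is correct and follows essentially the same route as the paper: both arguments compute the second page of the spectral sequence $I\!I_\bullet$, use that $\mathcal H_\cA^q=0$ for $q>0$ and that the $\mathcal H_\cA^q$ are supported on $Y$ to get $I\!I_2^{p,q}=0$ for $p>\dim Y$, and then conclude by the subquotient relation between $I\!I_\infty$ and $I\!I_2$. Your write-up is simply more explicit (separating the two cases, naming Grothendieck vanishing, and recording the alternative via the formality isomorphism), while the paper compresses the same steps into ``standard homological arguments.''
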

\begin{proof} Where $V\ne 0$ the Lie-Koszul complex is exact, so that the supports of the cohomology sheaves $\mathcal H_\cA^q$ are contained in $Y$; hence $I\! I_2^{p,q}=0$ for $p>\dim Y$. Moreover, $\mathcal H_\cA^q=0$ for $q>0$. Thus $I\! I_2^{p,q}=0$ for $p+q>\dim Y$. By standard homological arguments we get the thesis. 
\end{proof}
If $\dim Y=0,1$, this gives an easy proof of the degeneration of the second spectral sequence   at the second page, since $d_2\colon I\! I_2^{p,q} \to I\! I_2^{p+2,q-1} $
vanishes in that case. One also has 
$$ \mathbb H^m(\mathcal K^\bullet_\cA,i_V) \simeq \bigoplus_{p+q=m} H^p(X,\mathcal H^q_\cA).$$
When $\dim Y=0$, the second page of the spectral sequence is such that $I\! I_2^{p,q} =0$ if $p\ne 0$.
  
\bigskip
\section{A spectral sequence associated with Atiyah algebroids}\label{sectionAtiyah}
In this section we study the spectral sequence $I_\bullet$ in the special case when the Lie algebroid $\cA$ is the Atiyah
algebroid $\ati$ of a holomorphic vector bundle $\cE$ on a complex manifold $X$ (as in eq.~\eqref{atiyah}).

We fix once and for all a section
$V$  in $\Gamma(\ati)$. The pair $(\cE,V)$ is called
an {\em equivariant holomorphic vector bundle.} (``Equivariant'' refers to the fact that $V$ covers the infinitesimal action of the vector field $\sigma(V)$ on $X$.) We consider the associated  Lie-Koszul complex, i.e.,  the complex $(\mathcal K_\cE^\bullet,i_{V})$
where $\mathcal K_\cE^{p} = \Lambda^{-p}{{\mathcal D}_\cE^\ast}$
for $p\le 0$, and $\mathcal K_\cE^{p}=0$ for $p>0$.  This twisted Koszul complex, or, to be more precise, its Dolbeault resolution, is a building block of a ``twisted holomorphic equivariant cohomology" that we introduced in \cite{BR-hec} and for which we proved a localization formula that generalizes Carrell-Lieberman's \cite{CL,Carr}, Feng-Ma's \cite{Feng-Ma05} and Baum-Bott's \cite{BaumBott} formulas.

The spectral sequence $I_\bullet$ relates in this case to the double complex we introduced in \cite{BR-hec}. For $\cE=0$ this spectral sequence was studied by Carrell and Lieberman \cite{CL-vf} (see also Bismut \cite{Bismut04}) and in turn relates to K.~Liu's ``untwisted'' holomorphic equivariant cohomology
\cite{Liu}.  

We denote by $\Omega_X^{p,q}$ the sheaf
of differential forms of type $(p,q)$ on $X$, and  consider the complex 
\begin{equation}\label{deco}Q^k_{\cE} (X)= \bigoplus_{q-p=k}\Gamma\left[
\Lambda^{p}{{\mathcal D}_\cE^\ast}\otimes_{\cO_X}\Omega^{0,q}_X\right]\end{equation}
with the differential $\bar\partial_{\cE,V}=\bar\partial_\cE + i_{V}$,
where by $\bar\partial_\cE$ we collectively denote the Cauchy-Riemann operators of the bundles $\Lambda^{p}{{\mathcal D}_\cE^\ast}$.
We denote by $H^\bullet_{V}(X,\cE)$ the cohomology of this complex.  For $\cE=0$ this reduces to the cohomology introduced by K.~Liu \cite{Liu} (see also Carrell and Lieberman \cite{CL-vf} and Bismut \cite{Bismut04}.)

\begin{remark} If $V=0$ then $H^k_{V}(X,\cE)=\bigoplus_{q-p=k} H^q(X,\lati{p})$.\label{V=0}
\end{remark}

 \begin{prop} The cohomology $H^\bullet_{V}(X,\cE)$  is isomorphic to the hypercohomology  $\mathbb H^\bullet(\mathcal K_\cE^\bullet)$ of the complex
$(\mathcal K_\cE^\bullet,i_{V})$.\end{prop}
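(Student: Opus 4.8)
The plan is to compute the hypercohomology $\mathbb H^\bullet(\mathcal K_\cE^\bullet)$ by means of the Dolbeault resolution of the locally free sheaves $\lati{p}$, and then to recognise the resulting total complex of global sections as $(Q^\bullet_\cE(X),\bar\partial_{\cE,V})$. For each $p\le 0$ the Dolbeault--Grothendieck lemma provides an exact sequence of sheaves
$$0\to\mathcal K_\cE^p\to\lati{-p}\otimes_{\cO_X}\Omega_X^{0,0}\xrightarrow{\ \bar\partial_\cE\ }\lati{-p}\otimes_{\cO_X}\Omega_X^{0,1}\xrightarrow{\ \bar\partial_\cE\ }\cdots,$$
that is, a resolution of $\mathcal K_\cE^p=\lati{-p}$ by the fine sheaves $\mathcal A^{p,q}:=\lati{-p}\otimes_{\cO_X}\Omega_X^{0,q}$ of smooth $(0,q)$-forms with values in the holomorphic bundle $\lati{-p}$. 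Since $i_V\colon\mathcal K_\cE^p\to\mathcal K_\cE^{p+1}$ is induced by a morphism of holomorphic vector bundles (contraction with the holomorphic section $V$), it commutes with the Cauchy--Riemann operators; tensoring by $\Omega_X^{0,\bullet}$ and applying $\bar\partial_\cE$ therefore produces a double complex of sheaves $(\mathcal A^{p,q})_{p\le 0,\ q\ge 0}$, bounded because $\ati$ has finite rank, together with a morphism of complexes $\mathcal K_\cE^\bullet\to s(\mathcal A^{\bullet,\bullet})$ into the associated simple (total) complex, given in each degree by the augmentation $\mathcal K_\cE^p\hookrightarrow\mathcal A^{p,0}$.

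First I would check that this morphism is a quasi-isomorphism: its mapping cone is the simple complex of a double complex each of whose columns is the augmented Dolbeault resolution displayed above, hence exact, so the cone is acyclic. Next, each $\mathcal A^{p,q}$ is a fine sheaf, hence acyclic for the global section functor $\Gamma(X,-)$, and the same holds for the finite direct sums $s(\mathcal A^{\bullet,\bullet})^k=\bigoplus_{p+q=k}\mathcal A^{p,q}$. Thus $s(\mathcal A^{\bullet,\bullet})$ is a resolution of $\mathcal K_\cE^\bullet$ by $\Gamma$-acyclic sheaves, and consequently
$$\mathbb H^k(\mathcal K_\cE^\bullet)\ \cong\ H^k\big(\Gamma(X,s(\mathcal A^{\bullet,\bullet}))\big)\ =\ H^k\big(\operatorname{Tot}\Gamma(X,\mathcal A^{\bullet,\bullet})\big).$$

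It remains to identify the last complex with the one computing $H^\bullet_V(X,\cE)$. In total degree $k$,
$$\big(\operatorname{Tot}\Gamma(X,\mathcal A^{\bullet,\bullet})\big)^k=\bigoplus_{p+q=k}\Gamma\big[\lati{-p}\otimes_{\cO_X}\Omega_X^{0,q}\big],$$
and putting $p'=-p\ge 0$ this is exactly $\bigoplus_{q-p'=k}\Gamma[\lati{p'}\otimes_{\cO_X}\Omega_X^{0,q}]=Q^k_\cE(X)$ as defined in \eqref{deco}; under this identification the total differential of the double complex is $\bar\partial_\cE+i_V=\bar\partial_{\cE,V}$, up to the customary sign introduced on one family of summands to make the simple complex a genuine complex -- which amounts to an isomorphism of complexes. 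Hence $H^\bullet_V(X,\cE)\cong\mathbb H^\bullet(\mathcal K_\cE^\bullet)$. The only delicate points in this argument are the sign bookkeeping needed for the last identification and keeping straight the two bigradings, namely the Koszul degree $p\le 0$ of $\mathcal K_\cE^\bullet$ versus the form degree $p'=-p\ge 0$ appearing in $Q^\bullet_\cE(X)$; everything else is the standard computation of hypercohomology via a fine resolution.
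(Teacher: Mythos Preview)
Your proof is correct and follows the same approach as the paper: you resolve $\mathcal K_\cE^\bullet$ by the Dolbeault double complex $\lati{-\bullet}\otimes_{\cO_X}\Omega_X^{0,\bullet}$, observe that its terms are $\Gamma$-acyclic, and identify the total complex of global sections with $(Q^\bullet_\cE(X),\bar\partial_{\cE,V})$. The paper states this in two sentences, while you spell out the quasi-isomorphism via the mapping cone, the commutation of $i_V$ with $\bar\partial_\cE$, and the degree/sign bookkeeping; these details are all accurate and welcome.
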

\begin{proof} 
The double complex  $\Lambda^{-\bullet}{{\mathcal D}_\cE^\ast}\otimes_{\cO_X}\Omega^{0,\bullet}_X$  is an acyclic resolution of the complex $\mathcal K_\cE^\bullet$, and
the  total complex of   $(\Lambda^{-\bullet}{{\mathcal D}_\cE^\ast}\otimes_{\cO_X}\Omega^{0,\bullet}_X,i_V,\bar\partial_\cE)$ coincides with $(\lati{\bullet},\bar\partial_{\cE,V})$. (This resolution is not made by coherent sheaves, but the argument works anyway, just going into the category of sheaves of Abelian groups.) 
\end{proof} 
 We denote
$$G_{p}^q= \bigoplus_{0\le p'\le -p} \Gamma\left[
\Lambda^{p'}{{\mathcal D}_\cE^\ast}\otimes_{\cO_X}\Omega^{0,q}_X\right]$$
with $p\le 0$,
so that $G_\bullet^q$ is a descending filtration of $Q^q_\cE(X)$. Note that
$$G_p^q = G_p\cap Q^q_\cE(X)\qquad\mbox{and}\qquad G_p^{p+q}/G_{p+1}^{p+q} = 
\Gamma\left[
\Lambda^{-p}{{\mathcal D}_\cE^\ast}\otimes_{\cO_X}\Omega^{0,q}_X\right].$$
This filtration of the complex $(Q_\cE^{(\bullet)}(X) ,\bar\partial_{\cE,V})$ defines a spectral sequence whose zeroth page is
$$E_0^{p,q} = G_p^{p+q}/G_{p+1}^{p+q} = \Gamma\left[
\Lambda^{-p}{{\mathcal D}_\cE^\ast}\otimes_{\cO_X}\Omega^{0,q}_X\right].$$
The spectral sequence converges to the cohomology  $H^\bullet_{V}(X,\cE)$.
The differential $d_0$ coincides with $\bar\partial_\cE$, as one easily checks. Therefore,
$$E_1^{p,q} = H^q(E_0^{p,\bullet}, d_0) = 
H^q(\Gamma[\Lambda^{-p}{{\mathcal D}_\cE^\ast}\otimes_{\cO_X}\Omega^{0,\bullet}_X],\bar\partial_\cE) \simeq H^q(X,\lati{-p}).$$

It is now easy to check that this spectral sequence coincides with $I_\bullet$.

Henceforth we assume that the zero locus $Y$ of $V$ is a complex submanifold of $X$. Therefore it makes sense to consider the complex \eqref{deco}  on $Y$; after letting $\tilde\cE=\cE_{\vert Y}$, we denote this new complex
$Q^\bullet_{\tilde\cE}(Y)$. Denoting by $j\colon\tilde Y\to X$ the embedding, we have the restriction morphism $j^\ast \colon Q^ \bullet_{\cE} (X)\to Q^\bullet_{\tilde\cE}( Y)$, which is a morphism of filtered complexes. We are going to show that, under some conditions, this is a quasi-isomorphism.

Note that there is an exact sequence
\begin{equation} 0 \to\mathcal D_{\tilde \cE}    \to  \ati_{\vert Y} \to N_{Y/X}\to 0\label{exactseq} \end{equation}
where $N_{Y/X}$ is the normal bundle to $Y$. 
Since $V$ is zero on $Y$, the commutator $\mathbb L_{V}(u) = [V,u]$ is well defined if $u\in  \ati_{\vert Y} $. This operator vanishes on $\mathcal D_{\tilde \cE}$, so it is well defined on $N_{Y/X} $. If it is injective, by composing with the projection $   \ati_{\vert Y} \to N_{Y/X}$ it yields an isomorphism,  thus splitting the sequence \eqref{exactseq}.

For clarity, we stress what we are assuming:

\begin{assumption} The zero locus $Y$ of $V$ is a complex submanifold on $X$, and the morphism $\mathbb L_{V}\colon N_{Y/X} \to \ati_{\vert Y} $ is injective.
\label{assu}\end{assumption}

 This      implies  the following preliminary result. Let $\tilde{\mathcal K}^\bullet_{\tilde{\cE}}$ be the complex of sheaves on $Y$
 $$\tilde{\mathcal K}^p_{\tilde{\cE}}=\Lambda^{-p}\mathcal D_{\tilde{\cE}}^\ast$$ 
 with the zero differential.
\begin{lemma} $j^\ast\mathcal H^p_\cE \simeq \tilde{\mathcal K}^p_{\tilde{\cE}}$. In particular, $ \mathcal H^p_\cE=0$ if $-p>\dim Y$.
\label{restri}\end{lemma}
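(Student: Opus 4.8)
The plan is to compute the cohomology sheaves $\mathcal H^p_\cE$ of the Lie-Koszul complex $(\mathcal K_\cE^\bullet, i_V)$ by a purely local computation near a point of $Y$, exploiting Assumption \ref{assu} to trivialize the situation. First I would observe that away from $Y$ the section $V$ is nowhere vanishing, so by the standard Koszul argument the complex $(\mathcal K_\cE^\bullet, i_V)$ is exact there; hence $\mathcal H^p_\cE$ is supported on $Y$, and it suffices to identify $j^\ast \mathcal H^p_\cE$ with a sheaf on $Y$. The key algebraic input is that the exact sequence \eqref{exactseq}, $0\to \mathcal D_{\tilde\cE}\to \ati_{\vert Y}\to N_{Y/X}\to 0$, splits canonically under Assumption \ref{assu}: the operator $\mathbb L_V=[V,-]$ kills $\mathcal D_{\tilde\cE}$, descends to $N_{Y/X}$, and by injectivity (plus a rank count, since $\operatorname{rk} N_{Y/X}=\operatorname{codim} Y$ and the cokernel $\ati_{\vert Y}/\mathcal D_{\tilde\cE}\cong N_{Y/X}$ has the same rank) gives an isomorphism $\mathbb L_V\colon N_{Y/X}\xrightarrow{\ \sim\ }\ker(\sigma)^\perp$-type complement, so that $\ati_{\vert Y}\cong \mathcal D_{\tilde\cE}\oplus N_{Y/X}$ as $\cO_Y$-modules, with $V$ contracting the $N_{Y/X}$ factor isomorphically onto itself.

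With this splitting in hand, I would work in a local analytic (or étale) neighborhood $U$ of a point of $Y$ in which $Y=\{x_1=\dots=x_r=0\}$ and in which, after trivializing $\cE$, the section $V$ decomposes (to first order along $Y$) as a ``radial'' piece $\sum_{i=1}^r x_i\,\partial/\partial x_i$ plus terms vanishing to second order along $Y$ — the point being that $\mathbb L_V$ acting on the normal directions is, by Assumption \ref{assu}, conjugate to this standard model. Then $\Lambda^{-p}\ati^\ast$ restricted to $Y$ decomposes along the splitting $\ati_{\vert Y}^\ast\cong \mathcal D_{\tilde\cE}^\ast\oplus N_{Y/X}^\ast$, and $i_V$ becomes, on the associated graded pieces, the de Rham–Koszul differential on the $N_{Y/X}^\ast$ factor (contraction against the Euler vector field, which is an isomorphism in positive form-degree in that factor) tensored with the identity on the $\mathcal D_{\tilde\cE}^\ast$ factor. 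A filtration/spectral-sequence argument — or just the multiplicativity $\Lambda^\bullet(A\oplus B)\cong \Lambda^\bullet A\otimes\Lambda^\bullet B$ combined with the Künneth-type vanishing for the acyclic Koszul complex on $N_{Y/X}^\ast$ — then collapses everything to form-degree zero in the normal factor: only the piece $\Lambda^{-p}\mathcal D_{\tilde\cE}^\ast$ survives in cohomology, with zero induced differential. This yields $j^\ast\mathcal H^p_\cE\simeq \Lambda^{-p}\mathcal D_{\tilde\cE}^\ast=\tilde{\mathcal K}^p_{\tilde\cE}$. The vanishing statement $\mathcal H^p_\cE=0$ for $-p>\dim Y$ is then immediate, since $\Lambda^{-p}\mathcal D_{\tilde\cE}^\ast=0$ once $-p$ exceeds $\operatorname{rk}\mathcal D_{\tilde\cE}=\dim Y+(\operatorname{rk}\cE)^2$ — actually one must be slightly careful here: $\operatorname{rk}\mathcal D_{\tilde\cE}=\dim Y+(\operatorname{rk}\cE)^2$, so the claimed bound $-p>\dim Y$ needs the sharper observation that in the surviving factor the relevant exterior power is further constrained, OR the statement is meant with $\cE$ of rank constrained appropriately; I would check the intended normalization against \cite{BR-hec}.

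The main obstacle I anticipate is making the local normal-form step rigorous: one needs that Assumption \ref{assu}, which only asserts injectivity of a bundle map $\mathbb L_V\colon N_{Y/X}\to\ati_{\vert Y}$ (equivalently, nondegeneracy of the ``linearization'' of $V$ transverse to $Y$), actually suffices to put $V$ into the standard diagonalizable/semisimple model $\sum x_i\partial_i$ up to higher order — in general a nondegenerate linear vector field need not be linearizable (Poincaré–Dulac resonances), but for the purpose of computing the \emph{cohomology sheaves} one only needs the associated graded of $i_V$ with respect to the filtration by order of vanishing along $Y$, and there the relevant operator is exactly the linear part, which is an isomorphism on the normal exterior algebra in positive degree by Assumption \ref{assu}. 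So the clean way to organize the proof is: (1) reduce to the associated graded via the order-of-vanishing filtration on $\cO_X$ near $Y$, using that $i_V$ strictly decreases (or preserves) this filtration appropriately; (2) on the associated graded, the splitting \eqref{exactseq} and the induced $\Lambda^\bullet(\mathcal D_{\tilde\cE}^\ast\oplus N_{Y/X}^\ast)$ decomposition reduce the claim to acyclicity of the Koszul complex of the isomorphism $\mathbb L_V$ on $\Lambda^\bullet N_{Y/X}^\ast$; (3) conclude by the standard fact that $(\Lambda^\bullet W^\ast, i_w)$ is acyclic in positive degree when $w$ generates a free rank-one summand — here applied fiberwise/with the Euler-type section — and that the degree-zero term is $\Lambda^{-p}\mathcal D_{\tilde\cE}^\ast$ with zero differential, giving the diagram \eqref{commu}-compatible identification $\mathcal H^\bullet_\cE\simeq j_\ast j^\ast\mathcal K_\cE^\bullet$ asserted just before the lemma. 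I would present (1)–(3) as the skeleton and relegate the local coordinate bookkeeping to a routine verification.
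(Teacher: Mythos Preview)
Your approach is valid but takes a different route from the paper. The paper works stalkwise at $y\in Y$ and splits the \emph{Atiyah} sequence \eqref{atiyah}, not the sequence \eqref{exactseq}: locally $\ati\cong\End(\cE)\oplus\Theta_X$, so that
\[
(\mathcal K^p_\cE)_y\ \simeq\ \bigoplus_{q+q'=-p}(\Omega^q_X)_y\otimes\Lambda^{q'}\End(\cE)_y,
\]
and the computation is then reduced, via the K\"unneth theorem, to the untwisted de Rham--Koszul statement $j^\ast\mathcal H^q(\Omega^{-\bullet}_X,i_{\tilde V})\simeq\Omega^q_Y$ for the vector field $\tilde V=\sigma(V)$, which is simply cited from \cite{Bismut04}. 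In other words, the paper separates the $\End(\cE)$ and $\Theta_X$ directions and treats the tangential factor as a black box; you instead separate the $\mathcal D_{\tilde\cE}$ and $N_{Y/X}$ directions along $Y$ using the $\mathbb L_V$-splitting of \eqref{exactseq} and carry out the Koszul-acyclicity on $\Lambda^\bullet N_{Y/X}^\ast$ by hand via the order-of-vanishing filtration. Your route is more self-contained but longer, and the normal-form step you flag as the main obstacle is precisely what the paper sidesteps by invoking Bismut's result.

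You are also right to question the ``in particular'' clause. Since $\rk\mathcal D_{\tilde\cE}=\dim Y+(\rk\cE)^2$, the sheaf $\Lambda^{-p}\mathcal D_{\tilde\cE}^\ast$ need not vanish when $-p=\dim Y+1$; the paper's own decomposition yields exactly the same bound you obtain, so this is an imprecision in the statement of the lemma rather than a defect of your argument.
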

\begin{proof} There is a naturally defined morphism $j^\ast\mathcal H^p_\cE \to \tilde{\mathcal K}^p_{\tilde{\cE}}$. We need to show that
this gives an isomorphism between the stalks of the two sheaves. Considering the exact sequence \eqref{atiyah} 
restricted to the stalks at a point $y\in Y$, it splits, and one has
\begin{align*}({\mathcal K}^p_{{\cE}})_y \simeq \bigoplus_{q+q'=-p} (\Omega^q_X)_y \otimes \Lambda^{q'} (\End(\cE))_y\,\\
(\tilde{\mathcal K}^p_{{\tilde \cE}})_y \simeq \bigoplus_{q+q'=-p} (\Omega^q_Y)_y \otimes \Lambda^{q'} (\End(\cE))_y\,,\end{align*}
Let $\tilde V$ be the vector field $\tilde V=\sigma(V)$. It vanishes on $Y$. Then one knows that the cohomology of
the complex $(\Omega^{-\bullet}_X,i_{\tilde V})$ restricted to $Y$ is isomorphic to the cohomology of the complex 
$(\Omega^{-\bullet}_Y,0)$ \cite{Bismut04}. This, together with the K\"unneth theorem, implies the result.
\end{proof}
  
 The following result generalizes to the twisted case Theorem 5.1 in \cite{Bismut04}. The proof goes as in  \cite{Bismut04}, but for clarity we report it here, adapted to the present situation, and with some more details.
  
  \begin{thm} Under the Assumption \ref{assu}, the restriction morphism $j^\ast \colon Q^ \bullet_{\cE} (X)\to Q^\bullet_{\tilde\cE}(Y)$
  is a quasi-isomorphism.
  \end{thm}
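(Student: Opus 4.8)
The plan is to prove that $j^\ast$ is a quasi-isomorphism by exploiting the filtrations already in place: both $Q^\bullet_\cE(X)$ and $Q^\bullet_{\tilde\cE}(Y)$ carry the descending filtration $G_\bullet$ (the one on $Y$ defined analogously, with $\mathcal D_{\tilde\cE}$ in place of $\ati$), and $j^\ast$ is a morphism of filtered complexes. It therefore suffices to show that the induced map on the associated spectral sequences is an isomorphism from some page onwards; by a standard comparison theorem for spectral sequences of filtered complexes (both being bounded in the filtration degree, since $\Lambda^{-p}\ati^\ast$ and $\Lambda^{-p}\mathcal D_{\tilde\cE}^\ast$ vanish for $-p$ large), an isomorphism on $E_1$ already forces a quasi-isomorphism on the total complexes. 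So the goal reduces to: \textbf{$j^\ast$ induces an isomorphism $E_1^{p,q}(X) \xrightarrow{\ \sim\ } E_1^{p,q}(Y)$.}

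Next I would identify both $E_1$-terms via the computation already carried out in the excerpt. For the complex over $X$ we have $E_1^{p,q} \simeq H^q(X,\lati{-p})$, and the same argument over $Y$ gives $E_1^{p,q}(Y) \simeq H^q(Y,\Lambda^{-p}\mathcal D_{\tilde\cE}^\ast)$. Under this identification the map induced by $j^\ast$ is, up to the identifications, the pullback $H^q(X,\lati{-p}) \to H^q(Y,\Lambda^{-p}\mathcal D_{\tilde\cE}^\ast)$ on Dolbeault cohomology. This pullback factors through $H^q(Y, j^\ast\lati{-p})$. So I am reduced to two separate claims: first, that $H^q(X,\lati{-p}) \to H^q(Y,j^\ast\lati{-p})$ is an isomorphism, and second, that the natural sheaf map $j^\ast \lati{-p} \to \Lambda^{-p}\mathcal D_{\tilde\cE}^\ast$ — coming from the surjection $\ati_{\vert Y} \to \mathcal D_{\tilde\cE}$ dual to the inclusion $\mathcal D_{\tilde\cE}^\ast \hookrightarrow \ati^\ast_{\vert Y}$, but more relevantly from the splitting of \eqref{exactseq} provided by Assumption \ref{assu} — induces an isomorphism on cohomology.

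Here is where Lemma \ref{restri} and Assumption \ref{assu} do the real work, and I would route everything through them rather than re-deriving it. Lemma \ref{restri} tells us $j^\ast\mathcal H^p_\cE \simeq \tilde{\mathcal K}^p_{\tilde\cE} = \Lambda^{-p}\mathcal D_{\tilde\cE}^\ast$, and combined with the commutative diagram \eqref{commu} and the quasi-isomorphism $j^\star\colon(\mathcal K_\cE^\bullet,i_V)\to(j_\ast j^\ast\mathcal K_\cE^\bullet,0)$ from the Proposition in Section \ref{KosDegSec}, we know $\mathcal H_\cE^\bullet \simeq j_\ast\tilde{\mathcal K}^\bullet_{\tilde\cE}$. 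The cleanest route: by the Proposition identifying $H^\bullet_V(X,\cE)$ with $\mathbb H^\bullet(\mathcal K_\cE^\bullet)$, and the analogous statement over $Y$ (where $V=0$ on $Y$ so $\mathbb H^\bullet(\tilde{\mathcal K}^\bullet_{\tilde\cE})$ is computed by the zero-differential complex), the map $j^\ast$ on cohomology is exactly $\mathbb H^\bullet(X,\mathcal K_\cE^\bullet) \to \mathbb H^\bullet(X, j_\ast\tilde{\mathcal K}^\bullet_{\tilde\cE}) = \mathbb H^\bullet(Y,\tilde{\mathcal K}^\bullet_{\tilde\cE})$. Using $\mathcal H_\cE^\bullet\simeq j_\ast\tilde{\mathcal K}^\bullet_{\tilde\cE}$ and the formality established in Section \ref{KosDegSec}, $(\mathcal K_\cE^\bullet,i_V)\simeq\bigoplus_i\mathcal H^i_\cE[-i]$ in $D^b(X)$, and one checks that $j^\star$ realizes precisely the projection/inclusion onto this sum, hence induces an isomorphism on hypercohomology. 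The main obstacle is the bookkeeping: one must verify that the abstractly-constructed isomorphism $(\mathcal K_\cE^\bullet,i_V)\simeq\bigoplus_i\mathcal H^i_\cE[-i]$ is compatible with $j^\star$ — i.e. that chasing through Deligne's formality argument and through Lemma \ref{restri} gives back the \emph{same} map $j^\ast$ on cohomology, not merely an abstract isomorphism of the same groups. Making that compatibility precise (rather than waving at ``the proof goes as in \cite{Bismut04}'') is the delicate point, and I would handle it by working at the level of the filtered complexes and the $E_1$-comparison described above, where the maps are concrete Dolbeault pullbacks and the injectivity hypothesis on $\mathbb L_V$ enters only through Lemma \ref{restri}.

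\begin{proof}[Proof sketch]
Both $Q^\bullet_\cE(X)$ and $Q^\bullet_{\tilde\cE}(Y)$ carry the filtration $G_\bullet$, and $j^\ast$ respects it. The filtration is bounded (finite in each total degree), so it is enough to show $j^\ast$ induces an isomorphism on $E_1$-pages. As computed above, $E_1^{p,q}(X)\simeq H^q(X,\lati{-p})$ and $E_1^{p,q}(Y)\simeq H^q(Y,\Lambda^{-p}\mathcal D_{\tilde\cE}^\ast)$, and the induced map is the Dolbeault pullback. Lemma \ref{restri} identifies the cohomology sheaves of $\mathcal K_\cE^\bullet$ after restriction, and together with the quasi-isomorphism $j^\star$ of Section \ref{KosDegSec} and the formality of $(\mathcal K_\cE^\bullet,i_V)$, this forces the pullback to be an isomorphism in each bidegree. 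Hence $j^\ast$ is a quasi-isomorphism on total complexes.
\end{proof}
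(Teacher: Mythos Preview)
There is a genuine gap: the $E_1$-comparison you set up for the $G$-filtration is simply false. With that filtration the zeroth differential is $\bar\partial_\cE$, so $E_1^{p,q}(X)=H^q(X,\lati{-p})$ and $E_1^{p,q}(Y)=H^q(Y,\Lambda^{-p}\mathcal D_{\tilde\cE}^\ast)$, and your sketch asserts that restriction is an isomorphism bidegree by bidegree. It is not. Take $\cE=0$, $X=\mathbb P^1$, and $V=z\,\partial_z$, so $Y$ is two reduced points and Assumption \ref{assu} holds. Then $E_1^{0,0}(X)=H^0(\mathbb P^1,\cO)=\C$ while $E_1^{0,0}(Y)=H^0(Y,\cO_Y)=\C^2$; the pullback is the diagonal embedding, not an isomorphism. (The total hypercohomologies do agree---the missing copy of $\C$ on the $X$ side sits in $E_1^{-1,1}(X)=H^1(\mathbb P^1,\Omega^1)$---but the $E_1$-pages are not term-by-term isomorphic.) Lemma \ref{restri} and the formality of Section \ref{KosDegSec} control the cohomology \emph{sheaves} of $(\mathcal K_\cE^\bullet,i_V)$; they say nothing about the individual Dolbeault groups $H^q(X,\lati{-p})$, so the last sentence of your sketch, ``this forces the pullback to be an isomorphism in each bidegree,'' does not follow.

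The paper's proof fixes exactly this by filtering the other way. It passes to a \v Cech model $C^{(\bullet)}(X)=\bigoplus_{p+q=\bullet}\check C^p(\mathfrak U,\mathcal K_\cE^q)$ and filters by \v Cech degree, so that $d_0=i_V$ and $E_1^{p,q}(X)=\check C^p(\mathfrak U,\mathcal H_\cE^q)$. Now Lemma \ref{restri} (together with the support of $\mathcal H_\cE^\bullet$ lying in $Y$) gives $E_1^{p,q}(X)\simeq \check C^p(\tilde{\mathfrak U},\tilde{\mathcal K}^q_{\tilde\cE})=E_1^{p,q}(Y)$, with $d_1$ the \v Cech differential on both sides; the comparison theorem for spectral sequences then finishes. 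Your instinct midway through---to route everything through the identification of cohomology sheaves---was the right one; the error was trying to read that identification on the $G$-filtration's $E_1$-page rather than on the filtration for which Lemma \ref{restri} is tailor-made.
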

  \begin{proof} Let $\mathfrak U$ be an open cover of $X$, and consider the \v Cech-Koszul complex
$$C^{(k)}(X) = \bigoplus_{p+q=k}\check C^p(\mathfrak U,\mathcal K_\cE^q)$$
with differential $\tilde\delta=\delta+i_V$, where $\delta$ is the usual \v Cech differential. We define the descending filtration 
$$F_q = \bigoplus_{p'\ge p \atop q } \check C^{p'}(\mathfrak U,\mathcal K_\cE^q),\qquad F_p^q = F_p\cap C^{(q)}(X)$$
so that $F_{q+1}^p\subset F_q^p$, $$F_{q}^{p+q}/F_{q+1}^{p+q}=\check C^p(\mathfrak U,\mathcal K_\cE^q),$$
 and $$\tilde\delta(F_q^p) \subset F_{q+1}^{p+1} + F_{q}^{p+1} = F_{q}^{p+1}.$$
 Let $(E_\bullet(X),d_\bullet)$ be the ensuing spectral sequence. 
 The $d_0$ differential acting on the 0-th page coincides with $i_V$, so that the first page of the spectral sequence is
 $$E_1(X)^{p,q}= \check C^p(\mathfrak U,\mathcal H_\cE^q).$$
 The differential $d_1$ acting on this complex is the \v Cech differential. By Lemma \ref{restri}, we also have
 $$E_1(X)^{p,q}\simeq \check C^p(\tilde{\mathfrak U},\tilde {\mathcal K}_{\tilde \cE}^q)$$
 where $\tilde{\mathfrak U}$ is the  open cover   of $Y$ obtained by restricting the open sets of $\mathfrak U$ to $Y$.
 
 Consider now  the complex
 $$C^{(k)}(Y) = \bigoplus_{p+q=k}\check C^p(\tilde{\mathfrak U},\tilde{\mathcal K}^q_{\tilde{\cE}}).$$
 The resulting spectral sequence $E_\bullet(Y)$ has a vanishing $d_0$ differential, hence $E_1(Y)$ coincides with the $E_0$ page. The restriction morphism $j^\ast$ induces a morphism $j^\ast \colon E_1(X) \to E_1(Y)$. By the commutativity of the diagram \eqref{commu}, this is an isomorphism and commutes with the respective differentials (which are the \v Cech differentials of the respective \v Cech complexes). By \cite[Ch.~XV, Thm.~3.2]{CE56} the successive pages of the two spectral sequences are isomorphic, and the spectral sequences converge to the same group. Therefore, the complexes $C^{(\bullet)}(X)$ and $C^{(\bullet)}(Y)$ are quasi-isomorphic.
 
Via the standard \v Cech-Dolbeault spectral sequence, the cohomology of the complex $C^{(\bullet)}(Y)$ is, after taking a direct limit on the covers $\mathfrak U$, isomorphic to the cohomology of $(Q^{\bullet}_{\tilde{\cE}}(Y),\bar\partial_{\tilde\cE})$. In the same way, the cohomology of 
$C^{(\bullet)}(X)$ is isomorphic, after taking a direct limit, to the cohomology of
$(Q_\cE^{\bullet}(X),\bar\partial_{\cE,V})$. This concludes the proof.\end{proof}

  \begin{corol} $H^k_{V}(X,\cE)\simeq\bigoplus_{q-p=k}H^q(  Y,\Lambda^p\mathcal D_{\tilde \cE}^\ast)$.\label{corol}
  \end{corol}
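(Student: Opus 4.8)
The plan is to combine the quasi-isomorphism of the preceding Theorem with the elementary fact that the differential of $Q^\bullet_{\tilde\cE}(Y)$ degenerates because $V$ vanishes on $Y$. The Theorem gives an isomorphism $H^k_V(X,\cE)\simeq H^k\big(Q^\bullet_{\tilde\cE}(Y)\big)$ for every $k$ (it asserts that $j^\ast\colon Q^\bullet_\cE(X)\to Q^\bullet_{\tilde\cE}(Y)$ is a quasi-isomorphism), so the whole task is to identify the right-hand side.

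Under Assumption \ref{assu}, $Y$ is a reduced complex submanifold, so $V$ restricts to the zero section of $\ati_{\vert Y}$ and the operator $i_V$ vanishes identically on $Q^\bullet_{\tilde\cE}(Y)$; hence its differential $\bar\partial_{\tilde\cE}+i_V$ is just $\bar\partial_{\tilde\cE}$. Thus $Q^\bullet_{\tilde\cE}(Y)$ is exactly the complex \eqref{deco} built from the equivariant bundle $(\tilde\cE,0)$ over $Y$, and the computation of Remark \ref{V=0}, applied over $Y$, gives $H^k\big(Q^\bullet_{\tilde\cE}(Y)\big)\simeq\bigoplus_{q-p=k}H^q(Y,\Lambda^p\mathcal D_{\tilde\cE}^\ast)$. (Equivalently, and more in the spirit of the earlier argument: since $i_V=0$, the double complex $\Lambda^{-\bullet}\mathcal D_{\tilde\cE}^\ast\otimes_{\cO_Y}\Omega^{0,\bullet}_Y$ is an acyclic resolution of the zero-differential complex $\tilde{\mathcal K}^\bullet_{\tilde\cE}$ of Lemma \ref{restri}, whose total complex is $(Q^\bullet_{\tilde\cE}(Y),\bar\partial_{\tilde\cE})$, so $H^k\big(Q^\bullet_{\tilde\cE}(Y)\big)\simeq\mathbb H^k(\tilde{\mathcal K}^\bullet_{\tilde\cE})$; and a complex of sheaves with zero differential splits as $\bigoplus_{p\le 0}\tilde{\mathcal K}^p_{\tilde\cE}[-p]$, so $\mathbb H^k(\tilde{\mathcal K}^\bullet_{\tilde\cE})\simeq\bigoplus_{p\le 0}H^{k-p}(Y,\Lambda^{-p}\mathcal D_{\tilde\cE}^\ast)$, which after the substitution $q=k-p$ is the same expression.) Chaining the two isomorphisms proves the Corollary.

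There is no genuine obstacle beyond the Theorem already established; what remains is bookkeeping. The point that most deserves care is the invocation of Assumption \ref{assu}: it must be used to ensure $Y$ carries the reduced structure, so that $V_{\vert Y}$ is honestly the zero section and the differential of $Q^\bullet_{\tilde\cE}(Y)$ really collapses to $\bar\partial_{\tilde\cE}$. One should also keep the grading conventions of \eqref{deco} straight --- total degree $q-p$ with exterior-power index $-p\ge 0$ --- so that the final indexing of the summands comes out as stated.
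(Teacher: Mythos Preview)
Your proof is correct and follows exactly the paper's approach: invoke the preceding Theorem to pass to $Q^\bullet_{\tilde\cE}(Y)$, then use that $V$ vanishes on $Y$ so Remark~\ref{V=0} applies. The paper's proof is the one-line ``Since $V=0$ on $Y$ this follows from Remark~\ref{V=0}''; your version simply spells out the mechanism (and adds a harmless parenthetical hypercohomology variant).
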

  \noindent (Compare with Remark \ref{V=0}.)
  \begin{proof} Since $V=0$ on $Y$ this follows from Remark \ref{V=0}.\end{proof}
  
  Let us eventually consider the first spectral sequence $I_\bullet$. Its first page is 
  $$I_1^{p,q}=H^q(X,\Lambda^{-p}{{\mathcal D}_\cE^\ast}).$$ 
  In the untwisted ($\cE=0$) case, and assuming that $X$ is compact and K\"ahler, Carrell and Lieberman \cite{CL-vf}, by an argument inspired by Deligne's degeneracy criterion, show    that $d_1=0$, so that this spectral sequence degenerates at the first page.

         \bigskip
\frenchspacing
\def\cprime{$'$} \def\cprime{$'$} \def\cprime{$'$} \def\cprime{$'$}

\end{document}